
\documentclass[letterpaper, 10 pt, conference]{ieeeconf}  

\IEEEoverridecommandlockouts                              
\overrideIEEEmargins

\usepackage{graphicx} 
\usepackage{epsfig} 
\usepackage{amsmath} 
\usepackage{amsthm}
\usepackage{amssymb}  

\usepackage{enumitem}
\usepackage[hidelinks]{hyperref}
\usepackage{cleveref}
\usepackage{xcolor}
\usepackage{comment}
\usepackage{subcaption}
\captionsetup{font=footnotesize}
\captionsetup[sub]{font=footnotesize}

\newtheorem{thm}{Theorem}

\theoremstyle{definition}
\newtheorem{defn}{Definition}

\newtheorem{rem}{Remark}

\allowdisplaybreaks[1]

\title{\LARGE \bf Solving Monotone Variational Inequalities with Best Response Dynamics
}


\author{Yu-Wen Chen, Can Kizilkale, Murat Arcak
\thanks{Y.-W. Chen,  C. Kizilkale and M. Arcak are with the Department of Electrical Engineering and Computer Sciences, University of California, Berkeley,
CA, USA.
{\tt\small \{yuwen\_chen, cankizilkale, arcak\}@berkeley.edu}.
This work was supported  by the National Science Foundation grant CNS-2135791.}%
}

\usepackage{tikz}

\newcommand\copyrighttext{%
  \footnotesize{\color{blue} \textcopyright \the\year{} IEEE. Personal use of this material is permitted. Permission from IEEE must be obtained for all other uses, including reprinting/republishing this material for advertising or promotional purposes, collecting new collected works for resale or redistribution to servers or lists, or reuse of any copyrighted component of this work in other works.}}

\newcommand\copyrightnotice{%
\begin{tikzpicture}[remember picture,overlay]
\node[anchor=north,yshift=-10pt] at (current page.north) {\fbox{\parbox{\dimexpr0.9\textwidth-\fboxsep-\fboxrule\relax}{\copyrighttext}}};
\end{tikzpicture}%
}
\renewcommand\fbox{\fcolorbox{red}{white}}
\setlength{\fboxrule}{2pt} 

\begin{document}

\maketitle

\copyrightnotice
\vspace{-10pt}
\thispagestyle{empty}
\pagestyle{empty}

\begin{abstract}

We leverage best response dynamics to solve monotone variational inequalities on compact and convex sets.
Specialization of the method to variational inequalities in game theory recovers 
convergence results to Nash equilibria 
when agents select the best response to the current distribution of strategies.
We apply the method to generalize population games with additional convex constraints. Furthermore, we explore the robustness of the method by introducing various types of time-varying disturbances. 

\end{abstract}
\section{INTRODUCTION}\label{sec:Intro}

Variational inequalities, introduced in \cite{Lions1967}, cover wide-ranging problems in modeling equilibria and optimization problems featuring inequality constraints. In transportation, they represent traffic equilibria \cite{Dafermos1980}. In operations research, they model equilibria in supply chains, network flows, and facility locations \cite{Allevi2018}. In finance, they reflect equilibrium prices, offering insights into financial market dynamics \cite{Nagurney2013}. In game theory, they capture Nash equilibria of noncooperative games \cite{Facchinei2003}. For atomic games, the Nash equilibria can be captured by variational inequalities derived from first order optimality conditions. In nonatomic (population) games, the equilibrium condition comes directly in the form of a variational inequality \cite{Sorin2016}. 
Many generalizations of variational inequalities have been proposed, including the complementary problem \cite{Lemke1965}, the quasi variational inequality \cite{Chan1982} and the general variational inequality \cite{Noor1988}.

To solve variational inequalities, constrained gradient descent methods are widely adopted in the literature \cite{Hieu2020,Gidel2017,Allibhoy2022}. 
These methods iteratively update the solution by incorporating both the gradient information of the objective function and the constraints to ensure that the iterates remain feasible. One such method is projected gradient descent, which optimizes constrained problems by iteratively adjusting solutions along the negative gradient direction and projecting them onto the feasible set. Another variant, the Frank-Wolfe method \cite{Frank1956}, solves the problem by iteratively moving towards the solution along linear approximations of the objective function. 
The Augmented Lagrangian Method \cite{Nocedal2006} solves constrained optimization problems by iteratively updating the Lagrange multipliers while minimizing a penalized version of the objective function. It combines the benefits of penalty methods and the method of multipliers. 


Finding the Nash equilibria in noncooperative games, as an application of variational inequalities, has its own track of research. 
In particular, the best response dynamics is one of the incentive-oriented methods, describing the iterative process where players sequentially update their strategies to 
minimize their cost based on the current strategies of others. If the process stops, meaning that no player benefits from changing unilaterally, then the Nash equilibrium is reached. The existing literature 
examines the 
convergence characteristics of the best response dynamics in game theory, 
notably in potential games as outlined in \cite{Monderer1996}. 
The best response dynamics for population games follow the method proposed in \cite{Matsui1992}, where the best response describes the best strategy distribution to the present 
cost.
The convergence results of the best response dynamics for population games have been well-studied for particular 
cost structures \cite{Hofbauer2009, Sandholm2010, Fox2013, Arcak2020}.

The first contribution of this paper is to leverage the best response dynamics to solve general variational inequalities, recovering results for 
games as special cases.
Furthermore, the method extends the scope of population games by allowing additional convex constraints, thus extending previous work for standard population games \cite{Hofbauer2009}.

The second contribution is to account for disturbances in  best response dynamics, moving beyond static forms of disturbance typically studied in variational inequalities. We introduce various types of time-varying disturbances and 
prove appropriate forms of 
Input-to-State Stability \cite{Sontag1989}. 

The remainder of the paper is organized as follows. In \Cref{sec:Pre}, preliminary results are given. In \Cref{sec:BRD}, solutions of variational inequalities are studied using the best response dynamics. In \Cref{sec:Dis}, various types of disturbance are introduced and the robustness of the best response dynamics is analyzed. In \Cref{sec:APP}, several applications are presented. Finally in \Cref{sec:Conclusion}, conclusions are given.

\section{PRELIMINARIES}\label{sec:Pre}
In this section, we review variational inequalities and the best response dynamics.

\subsection{Variational Inequalities} \label{sec:pre_vi}
\begin{defn}[Variational Inequality] \label{def:VI}
    Given a set $K\subseteq\mathbb{R}^n$ and a mapping $F:K\rightarrow\mathbb{R}^n$, we say that $x\in K$ solves the variational inequality, denoted as $VI(K,F)$, 
    if
    \begin{align}
    (y-x)^T F(x) \geq 0 , \quad \forall y \in K. \label{eq:VI}
    \end{align}
    The set of all solutions is denoted as $SOL(K,F)$.
\end{defn}

\begin{defn}[Strong Monotonicity / Monotonicity] \label{def:strong_mono}
    A function $F:K\subseteq\mathbb{R}^n\rightarrow\mathbb{R}^n$ is strongly monotone (or monotone) on $K$ if $\exists \ c>0$ (or $c=0$) s.t.
    \begin{align}
        (x-y)^T\left(F(x)-F(y)\right)\geq c||x-y||^2, \quad \forall x,y\in K.
    \end{align}\label{eq:strong_mono}
    If $F$ is continuously differentiable, then (\ref{eq:strong_mono}) is equivalent to \begin{align}
    z^T DF(x) z \geq c||z||^2, \quad \forall x\in K, z\in TK, \label{eq:PD}
\end{align}
where $TK$ denotes the tangent space to $K$.
\end{defn}


Most existing solvers for variational inequalities are closely connected to constrained optimization. Especially, lots of them are based on the projected gradient iterative algorithms; {\it e.g.},
\cite{Nagurney2012} proposes the continuous version,
\begin{align}
    \dot{x} &= \Pi_{TK(x)}(-F(x)), \label{eq:VI_alg_conti}
\end{align}
where $TK(x)$ denotes the tangent cone at $x$.

\subsection{Best response dynamics}\label{sec:pre_brd}


\subsubsection{Best response dynamics in atomic games}\label{sec:pre_brd_atomic}

Consider $n$ agents. For $i=1$ to $n$, agent-$i$ has a $d_i$-dimensional optimization variable  $x_i\in\mathbb{R}^{d_i}$ and an objective function $f_i(x_i,x_{-i})$ with a constraint set $\mathcal{X}_i\subseteq\mathbb{R}^{d_i}$, where $x_{-i}$ denotes all the optimization variables except $x_i$. Then, the best response is defined as
\begin{align}
    \arg\min_{x_i\in \mathcal{X}_i}\ f_i(x_i,x_{-i}). \label{eq:BR_atomic}
\end{align}
That is, the best response of an agent refers to a strategy that optimizes its objective function given the actions taken by the other agents.


The best response dynamics 
describes an iterative process where agents take turns making their best responses.
This process iterates until no player has an incentive to deviate from its chosen strategy unilaterally. Thus, the resulting strategies form a Nash equilibrium. The Nash equilibrium is a solution to the variational inequality (\ref{eq:VI}) where $F=[\nabla f_1,\cdots,\nabla f_n]^T$ and $K$ is the Cartesian product set $\mathcal{X}_1\times\cdots\times\mathcal{X}_n$ \cite{Pavel2022}.

\subsubsection{Best response dynamics in population games}\label{sec:pre_brd_pop}

In a single population game, the set of strategies available to the agents is denoted as $\mathcal{S}=\{1,\cdots,n\}$. Then, the social state is defined as $x=(x_1, \cdots, x_n)\in \mathbb{R}^n_+$, where $x_i$ represents the fraction of players choosing strategy $i$. Note that the social state $x$ 
lies
in the probability simplex $X=\{v=(v_1,\cdots,v_n)\in\mathbb{R}^n_+: \sum_{i=1}^n v_i=1, v_i\geq0\}$. A cost function $G:X \rightarrow \mathbb{R}^n$ maps the social state $x$ to a cost, denoted as $\pi=G(x)$. 
%
For a state $x\in X$, the best response to cost $\pi=G(x)$ is the strategy distribution that minimizes the total cost,
\begin{align*}
    \arg\min_{y\in X}\ y^T\pi.
\end{align*}

Evolutionary dynamics models for population games assume each agent continually revises its strategy and revision opportunities follow a Poisson process. When the opportunity arises, the agent switches its strategy according to a probability distribution defined by a learning rule \cite{Sandholm2010}. When the rule is to select the strategy with the lowest cost (``best response"), the resulting mean dynamics are
\begin{align*}
    \dot{x}(t) \in \left(\arg\min_{y\in X}\ y^T\pi(t) \right)-x(t),
\end{align*}
where $\pi(t)=G(x(t))$.


The solution of the best response dynamics above has been proven to converge to Nash equilibria in various games, {\it e.g.}, potential games, monotone games, and supermodular games; see \cite{Sandholm2010} for a comprehensive review. 

\section{BEST RESPONSE DYNAMICS FOR VARIATIONAL INEQUALITIES}\label{sec:BRD}

We now generalize
the best response dynamics 
to solve broader
variational inequalities than those describing Nash equilibria in population games. 
To do so, we extend the feasible set of the best response from a probability simplex $X$ to an arbitrary compact and convex set $K$.


\begin{defn}[Best response] \label{def:BR_BRD}
Let $K\subseteq\mathbb{R}^n$ be a compact and convex set. Given a vector $\pi\in\mathbb{R}^n$, the best response mapping $\beta(\pi)$ is defined as
\begin{align}
    \beta(\pi) = \arg\min_{y\in K}\ y^T \pi. \label{eq:BR} 
\end{align}
\end{defn}
Note that $\beta$ is a set-valued map. 
Next we define the best response dynamics.
\begin{defn}[Best response dynamics]
    Given a cost trajectory $\pi(t)$, the best response dynamics 
    is the
    differential inclusion,
    \begin{align}
        \dot{x}(t) \in \beta(\pi(t)) - x(t), \quad \forall t\geq0. \label{eq:BRD}
    \end{align}
\end{defn}
 Since $K$ is convex, $\beta(\pi(t))-x(t)$ belongs to the tangent cone at $x(t)$ making $K$ invariant under (\ref{eq:BRD}). Therefore, we do not require projection steps onto $K$ to ensure feasibility as in projected gradient methods. 

In the following, 
we use the best response dynamics to solve the variational inequality $VI(K,F)$.

\begin{thm} \label{thm:convex}
    Consider the variational inequality (\ref{eq:VI}) where $K \subseteq\mathbb{R}^n$ is compact and convex and $F:K \rightarrow \mathbb{R}^n$ is $C^1$ monotone. Then SOL($K$,$F$) is globally asymptotically stable under the best response dynamics (\ref{eq:BRD}) with $\pi(t)=F(x(t))$.
\end{thm}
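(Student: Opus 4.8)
The plan is to produce a Lyapunov function for the set $\mathrm{SOL}(K,F)$ and then invoke a standard set-stability argument. The candidate I would use is
\begin{align*}
w(x) \;=\; \max_{y\in K}\,(x-y)^T F(x) \;=\; x^T F(x)-\min_{y\in K} y^T F(x).
\end{align*}
Since $(x,y)\mapsto y^T F(x)$ is continuous and $K$ is compact, $w$ is continuous; moreover $w(x)\ge (x-x)^T F(x)=0$, and $w(x)=0$ iff $(y-x)^T F(x)\ge 0$ for all $y\in K$, i.e. $w^{-1}(0)=\mathrm{SOL}(K,F)$. Because $K$ is compact and $w$ is continuous, the sublevel sets $\{w\le c\}$ form a neighborhood basis of $\mathrm{SOL}(K,F)$ as $c\downarrow 0$, so $w$ is a proper, positive-definite Lyapunov candidate relative to $\mathrm{SOL}(K,F)$. (That $\mathrm{SOL}(K,F)$ is nonempty is classical for continuous $F$ on a compact convex $K$, and it also re-emerges from the argument below.)

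Next I would record the basic well-posedness facts. The map $\pi\mapsto\beta(\pi)$ is upper semicontinuous with nonempty, compact, convex values (it is the $\arg\min$ of a linear function over a compact convex set), hence $x\mapsto\beta(F(x))-x$ is upper semicontinuous with the same properties on $K$; therefore solutions of (\ref{eq:BRD}) with $\pi(t)=F(x(t))$ exist, are absolutely continuous (so differentiable a.e.), and — as already observed in the paper — stay in the compact set $K$ for all $t\ge 0$, so $\dot x(\cdot)$ is bounded and $w(x(\cdot))$ is Lipschitz in $t$.

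The core of the proof is the inequality $\tfrac{d}{dt}w(x(t))\le -w(x(t))$ for a.e.\ $t$. Fix a $t$ at which $x(\cdot)$, $\phi(x(\cdot))$ and $w(x(\cdot))$ are differentiable, where $\phi(x):=\min_{y\in K}y^T F(x)$, and let $\tilde b=\tilde b(t)\in\beta(F(x(t)))$ realize $\dot x(t)=\tilde b-x(t)$. Since $\tilde b\in K$ we have $\phi(x(s))\le \tilde b^T F(x(s))$ for all $s$, with equality at $s=t$; as $s\mapsto\phi(x(s))$ is differentiable at $t$ and touches the $C^1$ function $s\mapsto \tilde b^T F(x(s))$ from below there, the two derivatives coincide, so $\tfrac{d}{dt}\phi(x(t))=\tilde b^T DF(x(t))\dot x(t)$. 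Using $w=x^T F(x)-\phi$ and then $\dot x=\tilde b-x$,
\begin{align*}
\tfrac{d}{dt}w(x(t)) &= \dot x^T F(x)+(x-\tilde b)^T DF(x)\dot x \\
&= -w(x)-(\tilde b-x)^T DF(x)(\tilde b-x)\;\le\; -w(x),
\end{align*}
where $(\tilde b-x)^T F(x)=-w(x)$ because $\tilde b$ minimizes $y^T F(x)$, and the last inequality is monotonicity of $F$ (i.e.\ $z^T DF(x)z\ge 0$ on the tangent cone) applied to $z=\tilde b-x\in T_K(x)$, which is admissible since $x,\tilde b\in K$ and $K$ is convex. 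Then $w(x(t))\le w(x(0))e^{-t}\to 0$, hence $\mathrm{dist}(x(t),\mathrm{SOL}(K,F))\to 0$ by continuity of $w$ and compactness of $K$; combined with Lyapunov stability of $\mathrm{SOL}(K,F)$ (immediate from $w$ being a proper positive-definite Lyapunov function with $\dot w\le 0$), this gives global asymptotic stability.

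I expect the main obstacle to be precisely this nonsmooth derivative step: $w$ is only locally Lipschitz because $\beta$ is set-valued, so one cannot differentiate through the $\arg\min$, and a naive Danskin/Clarke computation leaves cross terms $(x-b)^T DF(x)(\tilde b-x)$ over $b\in\beta(F(x))$ that are not obviously signed. The device that rescues the argument is to use the very selection $\tilde b(t)$ that drives $\dot x(t)$ simultaneously as an active minimizer of $\phi$ at $x(t)$, together with the "touching from below at a point of differentiability" lemma, which collapses everything to a single monotonicity inequality; verifying this at a.e.\ $t$ is the delicate part. A secondary point needing care is the set-level stability claim (stability, not merely attractivity), which leans on compactness of $K$ so that the sublevel sets of $w$ are genuine neighborhoods of $\mathrm{SOL}(K,F)$.
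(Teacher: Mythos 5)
Your proposal is correct and follows essentially the same route as the paper: the same Lyapunov function $w(x)=x^TF(x)-\min_{y\in K}y^TF(x)$, the same key estimate $\tfrac{d}{dt}w(x(t))\le -w(x(t))$ obtained by cancelling the derivative of the min-term against $x^TDF(x)\dot x$ and invoking monotonicity on $\dot x^TDF(x)\dot x$, and the same conclusion via exponential decay of $w$ plus compactness of $K$. The only difference is technical bookkeeping: where the paper invokes the Envelope Theorem together with Clarke's generalized gradient and the set-stability theorem of Hofbauer--Sandholm, you differentiate the min-function via a self-contained ``touching from below at points of differentiability'' argument using the specific best-response selection driving $\dot x$, and close with Gr\"onwall and a direct compactness argument --- a valid, slightly more elementary packaging of the same proof.
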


\begin{proof}
Define the mapping on the right hand side of (\ref{eq:BRD}) as $f(x):=\beta(F(x))-x$. Since $f$ is upper-hemicontinous, nonempty, compact-valued, and convex-valued, there exists a Carath{\'e}odory solution $x(t), \forall t\geq0$ \cite{Aubin1984}.
   
   Define the Lyapunov function candidate
\begin{align}
    V(x) = U(x,F(x)),\label{eq:trueV}
\end{align}
where
\begin{align}
    U(x,\pi) = x^T \pi - \underbrace{ \min_{y\in K} \ y^T \pi}_{:= m(\pi)}, \label{eq:V}
\end{align}
which is nonnegative on $K$ and vanishes only on $SOL(K,F)$. To apply the nonsmooth Lyapunov techniques \cite{Shevitz1994}, we need to show that (\ref{eq:V}) is Lipschitz continuous w.r.t. its first and second arguments, respectively. The Lipschitzness w.r.t the first argument is from the affine form of (\ref{eq:V}). 
To prove the Lipschitzness of $m(\pi)$ w.r.t. $\pi$, we note that
    \begin{align*}
                m(\pi_1)-m(\pi_2)&=\min_{y\in K} \ y^T \pi_1 - \min_{y\in K} \ y^T \pi_2\\
                &\geq \min_{y\in K}\ y^T(\pi_1-\pi_2)
                \geq -M ||\pi_1-\pi_2||,
    \end{align*}
    where $M=\max_{x\in K}||x||$. Similarly, $m(\pi_2)-m(\pi_1)\geq -M||\pi_2-\pi_1||$, then the Lipschitz property: $|m(\pi_1)-m(\pi_2)|\leq M ||\pi_1-\pi_2||$  follows. Therefore, (\ref{eq:V}) is Lipschitz w.r.t. its second argument. Moreover, $x(t)$ is absolutely continuous w.r.t. $t$ and, thus, so is $\pi(t)$. As a result, for almost all points where $V$ is differentiable and $\dot{x}$ exists, we have
\begin{align}
    \frac{d}{dt}V(x(t))&=\frac{\partial U}{\partial x}(x(t),\pi(t)) \dot{x}(t) + \frac{\partial U}{\partial \pi}(x(t),\pi(t)) \dot{\pi}(t)\nonumber\\
    &=\pi(t)^T \dot{x}(t) + x(t)^T\dot{\pi}(t)-\frac{\partial m}{\partial \pi}(\pi(t))\dot{\pi}(t)\nonumber\\
    &\stackrel{(a)}{=} \pi(t)^T \dot{x}(t) + \left(x(t)-\beta(\pi(t))\right)^T\dot{\pi}(t)\nonumber\\
    &=\pi(t)^T \dot{x}(t) - \underbrace{\dot{x}(t) \ DF(x(t)) \ \dot{x}(t)}_{\geq0, \text{ by (\ref{eq:PD})}}\nonumber\\
    &\leq \pi(t)^T \left(- x(t) + \beta(\pi(t))\right)
    = -V(x(t)), \label{eq:Vdot}
\end{align} 
where $(a)$ follows from the Envelope Theorem \cite{Milgrom2002}. Using Clarke's generalized gradient \cite{Clarke1975}, we can extend the inequality (\ref{eq:Vdot}) to hold for almost all $t$. Finally, by the Theorem A.2 in \cite{Hofbauer2009}, we have $V(x(t))\rightarrow0$. Therefore, $SOL(K,F)$ is globally asymptotically stable.
\end{proof}

When we specialize variational inequalities to population games, where $K$ is the probability simplex, $SOL(K,F)$ characterizes the Nash equilibria. Thus, the result in \cite{Hofbauer2009} is a special case of \Cref{thm:convex}.


A
connection between (\ref{eq:VI_alg_conti}) and (\ref{eq:BRD}) can be made as follows. 
If $F$ is the gradient of a function, then (\ref{eq:VI_alg_conti}) is the projected gradient descent method and (\ref{eq:BRD}) is the Frank-Wolfe method. 
Although
(\ref{eq:VI_alg_conti}) and (\ref{eq:BRD}) 
have comparable complexity, (\ref{eq:VI_alg_conti}) requires a projection onto the {\it varying} tangent cone at the current point $TK(x)$ to remain feasible but (\ref{eq:BRD}) only needs to select a point in the {\it static} set $K$ and automatically remains feasible. This feature brings benefits to analysis, especially when disturbances are considered in the following section.

\section{ROBUSTNESS ANALYSIS}\label{sec:Dis}

In \Cref{thm:convex}, the function $F$ is perfectly known. We now consider a time-varying disturbance $\Delta(t)$ which perturbs $F(x(t))$ into $F(x(t))+\Delta(t)$. Then, the best response dynamics become 
\begin{align}
    \dot{x}(t) \in \beta(\tilde{\pi}(t)) - x(t), \quad \tilde{\pi}(t)=F(x(t))+\Delta(t). \label{eq:pay_dis}
\end{align}

This form of the disturbance alters the trajectory $x(t)$ indirectly via changes to the cost function $F$. In contrast, a disturbance $\varepsilon(t)$ may affect $x(t)$ by directly perturbing the best response dynamics into
\begin{align}
    \dot{x}(t) \in \beta(\pi(t)) - x(t) + \varepsilon(t), \quad \pi(t)=F(x(t)). \label{eq:dyn_dis}
\end{align}

\begin{defn}[Cost disturbance and Dynamics disturbance]
    We call disturbances appearing as in (\ref{eq:pay_dis}) {\it cost disturbances} and those as in (\ref{eq:dyn_dis}) {\it dynamics disturbances}.  
\end{defn}


To analyze the dynamics disturbance in (\ref{eq:dyn_dis}), we assume that the disturbance does not violate the constraints. For example, in a congestion game, the distribution of the traffic flows over routes 
add up to the total demand despite disturbances in the dynamics governing the evolution of flows.

\begin{defn}[Admissible Dynamics Disturbance]\label{def:admissible} For (\ref{eq:dyn_dis}), a dynamics disturbance $\varepsilon(t)$ is {\it admissible} if it is piecewise continuous, bounded, and satisfies 
\begin{align*}
    \dot{x}(t) \in TK(x(t)), \quad \forall t\geq0,
\end{align*}
that is, the resulting trajectory $x(t)$ will remain in $K$.
\end{defn}

In the following, we explore the general case where the best response dynamics are subject to both dynamics disturbances and cost disturbances. We show that the effects of disturbances are captured by the notion of Input-to-State Stability. Denote comparison functions class-$\mathcal{K}$, class-$\mathcal{K}_\infty$, and class-$\mathcal{KL}$, by $\mathcal{K}$, $\mathcal{K}_\infty$ and $\mathcal{KL}$, respectively \cite{Khalil2002}.

\begin{thm}\label{thm:comb}
    Consider best response dynamics subject to a dynamics disturbance $\varepsilon(t)$ and a cost disturbance $\Delta(t)$, 
    \begin{align*}
        \dot{x}(t) \in \beta(\tilde{\pi}(t)) - x(t) + \varepsilon(t),\quad \tilde{\pi}(t)=F(x(t))+\Delta(t),
    \end{align*}
     where $\varepsilon(t)$ is admissible and $\Delta(t)$ is bounded and with a bounded derivative.
     Let $K \subseteq\mathbb{R}^n$ be compact and convex and $F:K \rightarrow \mathbb{R}^n$ be $C^1$ strongly monotone. Then,
     \begin{align}
        ||x(t)&-x^*||\leq \omega(||x(0)-x^*||, t) \nonumber\\ &+\gamma_1\left(\max\left\{||\varepsilon||_\infty,||\Delta||_\infty\right\}\right) + \gamma_2(||\dot{\Delta}||_\infty),\label{eq:comb_bound}
     \end{align}
     where $x^*$ is the unique solution of $VI(K,F)$, $\omega\in\mathcal{KL}$, and $\gamma_1,\gamma_2\in\mathcal{K}$. In particular, if $\varepsilon(t)\rightarrow0$, $\Delta(t)\rightarrow0$, and $\dot{\Delta}(t)\rightarrow0$, then $x(t)\rightarrow x^*$.
\end{thm}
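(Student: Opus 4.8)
The plan is to reuse the Lyapunov construction of \Cref{thm:convex}, but with the gap function evaluated along the \emph{perturbed} cost $\tilde\pi(t)=F(x(t))+\Delta(t)$ rather than along $F(x(t))$. This is the key device: it makes the (possibly discontinuous) best response $\beta(\tilde\pi)$ appear on both sides of every cancellation, so the disturbances enter only as additive terms that are quantitatively controlled by $||\varepsilon||_\infty$, $||\Delta||_\infty$, and $||\dot\Delta||_\infty$. Existence of a Carath\'eodory solution with $x(t)\in K$ for all $t\ge0$ follows as in \Cref{thm:convex}: the right-hand side of the differential inclusion is upper-hemicontinuous, nonempty, and compact/convex-valued in $x$, and measurable in $t$ by piecewise continuity of $\varepsilon,\Delta$ \cite{Aubin1984}, while admissibility of $\varepsilon$ keeps $\dot x\in TK(x)$, hence $x(t)\in K$. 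Abbreviate $D=\mathrm{diam}(K)$, $F_{\max}=\max_{x\in K}||F(x)||$, and let $L$ bound $||DF(x)||$ on $K$ (finite since $F\in C^1$ and $K$ is compact). With $U$ and $m$ as in (\ref{eq:V}), I would take the candidate
\[
 W(x,t):=U\bigl(x,F(x)+\Delta(t)\bigr)=x^T\tilde\pi(t)-m(\tilde\pi(t)),\qquad \tilde\pi(t)=F(x)+\Delta(t),
\]
which satisfies $W(x,t)\ge0$ on $K$ (take $y=x$ in the $\min$ defining $m$), but which, unlike the Lyapunov function of \Cref{thm:convex}, no longer vanishes at $x^*$.

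Next I would differentiate $W$ along the trajectory. Since $m$ is Lipschitz (established in the proof of \Cref{thm:convex}) and $t\mapsto\tilde\pi(t)$ is absolutely continuous (because $x(t)$ is and $\Delta$ has a bounded derivative), $t\mapsto W(x(t),t)$ is absolutely continuous, and the nonsmooth calculus of \cite{Shevitz1994,Clarke1975} with the Envelope Theorem \cite{Milgrom2002} (giving $\tfrac{\partial m}{\partial\pi}(\pi)=\beta(\pi)$, a selection) yields, for a.e.\ $t$ and with $v\in\beta(\tilde\pi(t))$ chosen so that $\dot x(t)=v-x(t)+\varepsilon(t)$,
\[
 \tfrac{d}{dt}W(x(t),t)=\dot x^T\tilde\pi+(x-v)^TDF(x)\dot x+(x-v)^T\dot\Delta .
\]
Substituting $\dot x=v-x+\varepsilon$ and using $v^T\tilde\pi=m(\tilde\pi)$ gives $\dot x^T\tilde\pi=-W+\varepsilon^T\tilde\pi$; strong monotonicity (\ref{eq:PD}) applied to the tangent direction $x-v$ gives $(x-v)^TDF(x)(x-v)\ge c||x-v||^2\ge0$; and the leftover terms are bounded using $||\tilde\pi||\le F_{\max}+||\Delta||_\infty$, $||DF(x)||\le L$, $||x-v||\le D$. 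Hence, for a.e.\ $t$,
\[
 \tfrac{d}{dt}W(x(t),t)\le -W(x(t),t)+\rho,\qquad \rho:=\bigl(F_{\max}+||\Delta||_\infty+LD\bigr)||\varepsilon||_\infty+D\,||\dot\Delta||_\infty ,
\]
and the comparison lemma \cite{Khalil2002} gives $W(x(t),t)\le W(x(0),0)e^{-t}+\rho$ for all $t\ge0$.

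It then remains to sandwich $W$ between powers of $||x-x^*||$. Writing $W(x,t)=\max_{y\in K}(x-y)^T\tilde\pi$, evaluating at $y=x^*$ and using $c$-strong monotonicity together with $(y-x^*)^TF(x^*)\ge0$ yields $W(x,t)\ge c||x-x^*||^2-D||\Delta||_\infty$; evaluating at the maximizer and splitting $F(x)=F(x^*)+(F(x)-F(x^*))$ yields $W(x,t)\le (LD+F_{\max})||x-x^*||+D||\Delta||_\infty$. Chaining the three inequalities, solving for $||x(t)-x^*||$, and using $\sqrt{a+b}\le\sqrt a+\sqrt b$, one gets a bound of exactly the form (\ref{eq:comb_bound}): the $W(x(0),0)e^{-t}$ contribution produces $\omega(||x(0)-x^*||,t)=\sqrt{(LD+F_{\max})/c}\;||x(0)-x^*||^{1/2}e^{-t/2}\in\mathcal{KL}$; the terms involving $||\Delta||_\infty$ and $||\varepsilon||_\infty$ (including the cross product $||\Delta||_\infty||\varepsilon||_\infty$, absorbed via $st\le2\max\{s,t\}^2$) collect into a single $\gamma_1(\max\{||\varepsilon||_\infty,||\Delta||_\infty\})\in\mathcal{K}$; and the $||\dot\Delta||_\infty$ term becomes $\gamma_2(||\dot\Delta||_\infty)\in\mathcal{K}$. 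Uniqueness of $x^*$ is the usual consequence of strong monotonicity. Finally, when $\varepsilon,\Delta,\dot\Delta\to0$ one concludes $x(t)\to x^*$ either from the converging-input/converging-state property of ISS systems \cite{Sontag1989} or, directly, by running the comparison estimate from an arbitrary time $T$ and letting $T\to\infty$.

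The step I expect to be the main obstacle to carry out rigorously is the differentiation of $W(x(t),t)$: because $m(\cdot)$ is only piecewise linear and $\beta$ is set-valued, the chain rule and Envelope Theorem must be invoked through Clarke's generalized gradient, exactly as in \Cref{thm:convex}, and one must check that $\tfrac{d}{dt}W\le -W+\rho$ holds for \emph{every} element of the set-valued derivative — which it does, since each selection $v\in\beta(\tilde\pi)$ produces the same estimate and hence so does any convex combination. A secondary, purely bookkeeping difficulty is packaging the several disturbance-dependent constants, together with the square root introduced by inverting $W\ge c||x-x^*||^2$, into the precise $\mathcal{KL}+\mathcal{K}+\mathcal{K}$ form asserted in (\ref{eq:comb_bound}).
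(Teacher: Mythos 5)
Your proposal is correct and follows essentially the same route as the paper's proof: the perturbed gap function $W(x,t)=U(x,F(x)+\Delta(t))$ is exactly the paper's $V_2$, you differentiate it via the Envelope Theorem, obtain $\dot W\le -W+\rho$ and apply the comparison lemma, then sandwich $W$ between class-$\mathcal{K}$ functions of $\|x-x^*\|$ (up to $\pm D\|\Delta\|_\infty$) using strong monotonicity and compactness, just as the paper does with the pair $V_1,V_2$ and the bound $|V_2-V_1|\le D_K\|\Delta\|$. The only differences are cosmetic: you fold the two Lyapunov functions into one and bound the cross terms by the diameter of $K$ instead of absorbing them with Young's inequality into $-c\|x-\beta\|^2$.
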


\begin{proof}
    Similar to the proof of \Cref{thm:convex}, we define
    \begin{align}
        V_1(x)&=U_1(x,F(x)),\\
        V_2(x,t)&=U_2(x,F(x),\Delta(t)),
    \end{align}
    where
    \begin{align}
        U_1(x,\pi)&=x^T \pi - \min_{y\in K}\ y^T\pi,\label{eq:V1}\\
        U_2(x,\pi,\Delta)&=x^T (\pi+\Delta) - \min_{y\in K}\ y^T(\pi+\Delta).\label{eq:V2}
    \end{align}    
    We first derive the following relations:
    \begin{align}
        V_1(x) &\geq \alpha_1(||x-x^*||),\label{eq:V1_lower}\\
        V_2(x,t) &\leq \alpha_2(||x-x^*||)+D_K||\Delta(t)||,\label{eq:V2_upper}
    \end{align}
    for some $\alpha_1,\alpha_2\in\mathcal{K}_\infty$ and $D_K>0$ is the diameter of the compact and convex set $K$. Note that $x^*$ is the unique solution to $VI(K,F)$. Denote $\pi^*=F(x^*)$, then we have $       {x^*}^T \pi^* - \min_{y\in K} \ y^T \pi^*=0$. As a result, we have
    \begin{align}
        V_1(x)&=x^T \pi - \min_{y\in K} \ y^T \pi + {x^*}^T \pi^* - \min_{y\in K} \ y^T \pi^*\notag\\
        &=(x-x^*)^T(\pi-\pi^*) + ({x^*}^T\pi-\min_{y\in K} \ y^T\pi) \notag\\
        &\hspace{20pt} +(x^T\pi^*-\min_{y\in K}\ y^T\pi^*)\notag\\
        &\geq c||x-x^*||^2 := \alpha_1(||x-x^*||), \label{eq:V_lower}
    \end{align}
    where $\pi$ is attached to $F(x)$. On the other hand,
    \begin{align}
        V_2(&x,t)
        =x^T (\pi+\Delta(t)) - \min_{y\in K} \ y^T (\pi+\Delta(t)) \notag\\
        &\hspace{35pt}- {x^*}^T \pi^* + \min_{y\in K} \ y^T \pi^*\notag\\
        &=x^T(\pi+\Delta(t))-{x^*}^T\pi^*-y_1^T(\pi+\Delta(t))+y_2^T\pi^*\notag\\
        &\leq (x-x^*)^T(\pi-\pi^*)+(x-x^*)^T\pi^* \notag\\
        &\hspace{10pt}+{x^*}^T(\pi-\pi^*)-y_1^T(\pi-\pi^*)+(x-y_1)^T\Delta(t)\notag\\
        &\leq \underbrace{L||x-x^*||^2 + M||x-x^*||}_{:= \alpha_2(||x-x^*||)}+D_K||\Delta(t)||,\label{eq:V_upper}
    \end{align}
    where $\pi$ is attached to $F(x)$, $y_1=\arg\min_{y\in K}\ y^T(\pi+\Delta(t))$, and $y_2=\arg\min_{y\in K}\ y^T\pi^*$. The last inequality with $L,M>0$ follows from the Cauchy-Schwartz inequality, compactness of $K$, and Lipschitzness of $F$. Next, we prove
    \begin{align}
        ||V_2(x,t)-V_1(x)||\leq D_K ||\Delta(t)||. \label{eq:V1_V2_bound}
    \end{align}
    \begin{align*}
        V_2(x,t)-V_1(x)
        &=x^T\Delta(t) + y_1^T\pi - y_2^T(\pi+\Delta(t))\\
        &\leq x^T\Delta(t) + y_2^T\pi - y_2^T(\pi+\Delta(t))\\
        &\leq D_K||\Delta(t)||,
    \end{align*}
    where $\pi$ is attached to $F(x)$, $y_1=\arg\min_{y\in K} y^T\pi$, and $y_2=\arg \min_{y\in K} y^T(\pi+\Delta(t))$. Similarly, $V_2(x,t)-V_1(x) \geq (x-y_1)^T\Delta \geq -D_K||\Delta||$. Thus, (\ref{eq:V1_V2_bound}) is proved.
    
    In the following, we will derive a bound for the trajectory $V_2(x(t),t)$. We slightly abuse the notation by abbreviating $x(t)$, $\pi(t)$, $\Delta(t)$ as $x$, $\pi$, and $\Delta$, respectively.
    \begin{align*}
    \frac{d}{dt}&V_2(x(t),t)=\dot{x}^T(\pi+\Delta)+\left(x-\beta(\pi+\Delta)\right)^T(\dot{\pi}+\dot{\Delta})\\
    &=-V_2+\varepsilon^T(\pi+\Delta)+\left(x-\beta(\pi+\Delta)\right)^T\dot{\Delta}\\
    &\hspace{20pt}-\left(\beta(\pi+\Delta)-x\right)^T DF(x) \left(\beta(\pi+\Delta)-x+\varepsilon\right)\\
    &\leq -V_2-c||x-\beta(\pi+\Delta)||^2+||x-\beta(\pi+\Delta)|| ||\dot{\Delta}||\\
    &\hspace{20pt}+\sigma||x-\beta(\pi+\Delta)||||\varepsilon||+\varepsilon^T(\pi+\Delta)\\
    &\leq -V_2 + \underbrace{\frac{1}{2c}||\dot{\Delta}||^2 + \frac{\sigma^2}{2c}||\varepsilon||^2+M_1||\varepsilon||+||\varepsilon||||\Delta||}_{:=\Gamma(t)},  
    \end{align*}
    where $c$ is from \Cref{def:strong_mono}, $\sigma=||DF||_2$, and $M_1=\max_{x\in K}||\pi(x)||$. Then, by Comparison Lemma \cite{Khalil2002}, we get
    \begin{align*}
        V_2(x(t),t)
        &\leq V_2\left(x(0),0\right)e^{-t} + ||\Gamma||_\infty.
    \end{align*}
    Therefore by (\ref{eq:V1_V2_bound}), we have
    \begin{align}
        V_1(x(t))\leq V_2\left(x(0),0\right)e^{-t} +||\Gamma||_\infty + D_K||\Delta||_\infty. \label{eq:V1_bound}
    \end{align}    
    Finally, (\ref{eq:comb_bound}) follows from (\ref{eq:V1_lower}), (\ref{eq:V2_upper}), and (\ref{eq:V1_bound}).
\end{proof}

We next study a state-dependent perturbation $\delta$ on $F$:
\begin{align}
    \dot{x}(t)\in \beta(\tilde{\pi}(t))-x(t), \quad \tilde{\pi}(t)= (F+\delta)(x(t)),\label{eq:state_pay_dis}
\end{align}
Although we can analyze (\ref{eq:state_pay_dis}) with \Cref{thm:comb} by setting $\Delta(t)=\delta(x(t))$ and $\varepsilon(t)=0$, in the next theorem we exploit the additional structure to derive a refined 
bound.


\begin{thm}[State-dependent cost disturbance]\label{thm:state_pay_dis}
    Consider (\ref{eq:state_pay_dis}).  Let $K \subseteq\mathbb{R}^n$ be compact and convex, $F:K \rightarrow \mathbb{R}^n$ be $C^1$ strongly monotone, and $\delta:K \rightarrow \mathbb{R}^n$ be such that $\tilde{\pi}$ is $C^1$ strongly monotone. Then, the dynamics converge to a new perturbed equilibrium point $\tilde{x}^*$ and
    \begin{align}
        ||x^*-\tilde{x}^*||\leq \alpha_1^{-1}\left(h(\tilde{x}^*)\right),\label{eq:state_pay_bound}
    \end{align}
    where $x^*$ is the unique solution of unperturbed $VI(K,F)$ and $\alpha_1\in\mathcal{K}_\infty$. The function $h$ is given as
    \begin{align*}
    h(x)=\max\left\{(z-x)^T\delta(x): z=\arg\min_{y\in K}\ y^T F(x)\right\}.
    \end{align*}
\end{thm}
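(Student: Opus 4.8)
The plan is to obtain convergence directly from Theorem~\ref{thm:convex} and then to quantify how far the perturbed equilibrium moves by evaluating, at that equilibrium, the quadratic Lyapunov lower bound already established inside the proof of Theorem~\ref{thm:comb}.

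\emph{Convergence.} Since $\tilde\pi=F+\delta$ is $C^1$ strongly monotone it is in particular monotone, so Theorem~\ref{thm:convex} applies to (\ref{eq:state_pay_dis}) read as best response dynamics driven by the cost map $\tilde\pi(\cdot)=(F+\delta)(\cdot)$; hence $SOL(K,F+\delta)$ is globally asymptotically stable. Strong monotonicity of $F+\delta$ collapses this set to a singleton $\{\tilde x^*\}$, so $x(t)\to\tilde x^*$ and $\tilde x^*$ obeys $(y-\tilde x^*)^T(F+\delta)(\tilde x^*)\ge 0$ for all $y\in K$.

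\emph{The bound.} I would use $V_1(x)=x^T F(x)-\min_{y\in K}y^T F(x)$ from (\ref{eq:V1}). The estimate (\ref{eq:V_lower}) in the proof of Theorem~\ref{thm:comb} relies only on strong monotonicity of $F$ and on the nonnegativity of the two duality-gap terms, so it holds at every point of $K$, giving $V_1(x)\ge c\|x-x^*\|^2=\alpha_1(\|x-x^*\|)$ with $\alpha_1\in\mathcal{K}_\infty$. Now fix a minimizer $z^*\in\arg\min_{y\in K}y^T F(\tilde x^*)$ (which exists by compactness of $K$); then $V_1(\tilde x^*)=(\tilde x^*-z^*)^T F(\tilde x^*)$. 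Taking $y=z^*$ in the variational inequality for $\tilde x^*$ gives $(z^*-\tilde x^*)^T\!\big(F(\tilde x^*)+\delta(\tilde x^*)\big)\ge 0$, whence $V_1(\tilde x^*)=-(z^*-\tilde x^*)^T F(\tilde x^*)\le(z^*-\tilde x^*)^T\delta(\tilde x^*)\le h(\tilde x^*)$, the last inequality because $z^*$ is one of the minimizers over which $h$ maximizes. In particular $h(\tilde x^*)\ge V_1(\tilde x^*)\ge 0$ lies in the domain of $\alpha_1^{-1}$, and since $\alpha_1$ is strictly increasing, applying $\alpha_1^{-1}$ to $\alpha_1(\|x^*-\tilde x^*\|)\le V_1(\tilde x^*)\le h(\tilde x^*)$ yields (\ref{eq:state_pay_bound}).

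The one place that needs care is recognizing that the estimate proved for Theorem~\ref{thm:comb} is genuinely a \emph{global} lower bound on $V_1$ over $K$---hence usable at $\tilde x^*$, not merely along the unperturbed flow---and that the two leftover terms ``$x^{*T}\pi-\min_y y^T\pi$'' and ``$x^T\pi^*-\min_y y^T\pi^*$'' appearing there are automatically nonnegative. Once that observation is in hand, the remainder is a single application of the VI characterization of $\tilde x^*$ together with monotonicity of $\alpha_1^{-1}$.
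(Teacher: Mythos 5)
Your proof is correct and follows essentially the same route as the paper: convergence via Theorem~\ref{thm:convex} applied to the strongly monotone map $F+\delta$, and the bound obtained by evaluating the lower estimate $V_1(x)\geq\alpha_1(\|x-x^*\|)$ from the proof of Theorem~\ref{thm:comb} at $\tilde{x}^*$ and bounding $V_1(\tilde{x}^*)$ by $(z-\tilde{x}^*)^T\delta(\tilde{x}^*)\leq h(\tilde{x}^*)$. Your use of the variational-inequality characterization of $\tilde{x}^*$ is just an equivalent restatement of the paper's step $V_2(\tilde{x}^*)=0$ combined with $V_1\leq V_2+(z-x)^T\delta(x)$.
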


\begin{proof}
    The proof is similar to Theorem \ref{thm:comb}. Define
    \begin{align}
        V_1(x)&=U(x,F(x))\\
        V_2(x)&=U(x,(F+\delta)(x))
    \end{align}
    where
    \begin{align}
        U(x,\pi)&=x^T \pi - \min_{y\in K}\ y^T\pi.
    \end{align}
    Refer to the prove of (\ref{eq:V1_lower}) and (\ref{eq:V1_V2_bound}), then we have
    \begin{align*}
        \alpha_1(||x-x^*||)\leq V_1(x)&\leq V_2(x) + (z-x)^T\delta(x),
    \end{align*}
    where $z=\arg\min_{y\in K}\ y^T\pi$. By Theorem \ref{thm:convex}, with the dynamics (\ref{eq:state_pay_dis}), $x(t)$ converges to a perturbed equilibrium $\tilde{x}^*$ satisfying $V_2(\tilde{x}^*)=0$. As a result, (\ref{eq:state_pay_bound}) follows. 
\end{proof}

\begin{rem}
    The model (\ref{eq:state_pay_dis})
    encompasses the perturbed best response dynamics, which was used in \cite{Hofbauer2007}
   to study a smooth approximation   of the best response dynamics. 
    The perturbed best response dynamics is defined as
    \begin{align*}
        \dot{x}(t) \in \tilde{B}(x(t)) - x(t), \tilde{B}(x(t))=\arg\min_{y\in X} y^T\pi(t) + H(y),
    \end{align*}
    where $\pi(t)=F(x(t))$ and $H$ is 
    strictly convex, twice differentiable, and with its magnitude of gradient approaching infinity near the boundary of $X$. For the optimization problem $\tilde{B}$, if instead of directly picking the $\arg\min$, we apply the Frank-Wolfe method, then the point selected is
    \begin{align*}
        \arg\min_{y\in X}\ y^T\left(F(x(t))+\frac{\partial H}{\partial y}(x(t))\right),
    \end{align*}
    which is 
    covered by (\ref{eq:state_pay_dis}) with $\delta(x(t))=\frac{\partial H}{\partial y}(x(t))$. Since $H$ is strictly convex, $\delta$ is strictly monotone. Therefore, \Cref{thm:state_pay_dis} reproduces the convergence results for perturbed best response dynamics \cite{Hofbauer2007}. In addition, it provides a bound on the perturbed distance.
\end{rem}

\section{APPLICATIONS}\label{sec:APP}

\subsection{Traffic 
network
}

Consider the network 
in Fig. \ref{fig:traffic_lights},
which includes 3 routes and 3 Y-intersections with traffic lights shown in boxes with crosses. The flows on each route are denoted $x_1$, $x_2$, and $x_3$, respectively, and their sum is one. We zoom in on one of the Y-intersections, shown in Fig. \ref{fig:Y}, to explain the effects of traffic lights. 
An actuated 
traffic light results in the interdependency of Link 1 and Link 2 delays on each others' flows. Assuming the light prioritizes the branch line,
we let $\Phi_1(x_1,x_2)=x_1+3x_2$ and $\Phi_2(x_2,x_1)=x_1+x_2$. For the rest of the delay functions in the network, we model similarly as $\Phi_3(x,y)=\Phi_5(x,y)=x+3y$ and $\Phi_4(x,y)=\Phi_6(x,y)=x+y$. Then, the total delays on each route are $2x_1+3x_2+x_3$, $x_1+2x_2+3x_3$, and $3x_1+x_2+2x_3$, respectively.  Thus, the equilibrium of the traffic network is characterized by the variational inequality  (\ref{eq:VI}) where $K$ is the probability simplex and
\begin{align*}
    F(x)=\begin{bmatrix}
        2x_1+3x_2+x_3\\
        x_1+2x_2+3x_3\\
        3x_1+x_2+2x_3
    \end{bmatrix}.
\end{align*}
Note that this is not a potential game but a monotone game, since $DF$ is not symmetric but $DF+DF^T$ is positive semidefinite.
The simulation result is provided in Fig. \ref{fig:traffic_wo} where the best response dynamics lead to a spiral trajectory and converge to the equilibrium point $[\frac{1}{3},\frac{1}{3},\frac{1}{3}]^T$. 

\begin{figure}[!ht]
    \centering
    \includegraphics[scale=.4]{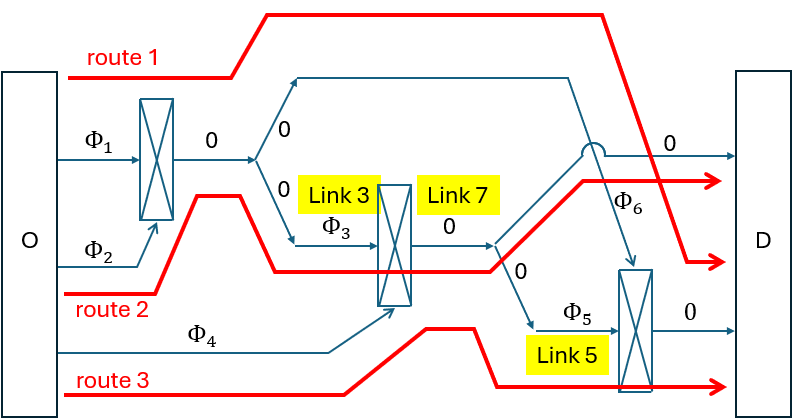}
    \caption{The network is composed of three Y-intersections. Three possible routes are given. The delay functions are provided in the contexts.}
    \label{fig:traffic_lights}
\end{figure}

\begin{figure}[!ht]
    \centering
    \includegraphics[scale=.35]{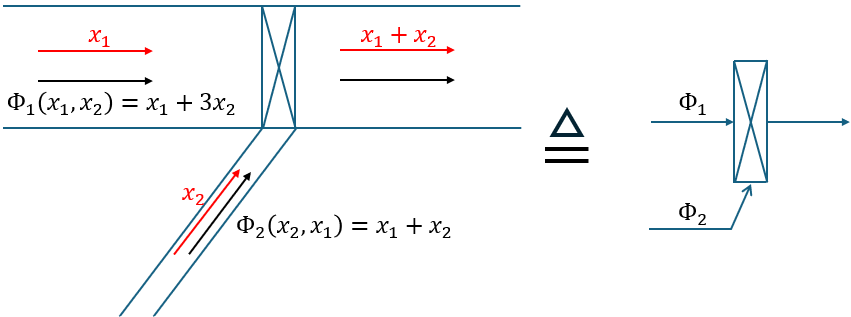}
    \caption{The figure describes the Y-intersection where the traffic lights induce delay. The traffic flows are in red, while the delay functions are in black. The branch line, Link 2, and the mainline, Link 1, affect each other through the traffic light making their delay functions depend on $x_1$ and $x_2$.}
    \label{fig:Y}
\end{figure}

As an illustration of the case where $K$ is a subset of the simplex, we 
add a
capacity constraint on Link 7: $x_2+x_3\leq0.9$, and delay constraints on Link 3 and Link 5: $\Phi_3(x_2,x_3)=x_2+3x_3\leq2$ and $\Phi_5(x_3,x_1)=x_3+3x_1\leq2$. 
Although the feasible set is no longer a simplex,
\Cref{thm:convex} ensures convergence. Simulation results are given in Fig. \ref{fig:traffic_w}.

\begin{figure}[!ht]
    \centering
    \begin{subfigure}[t]{0.23\textwidth}
    \includegraphics[width=\textwidth]{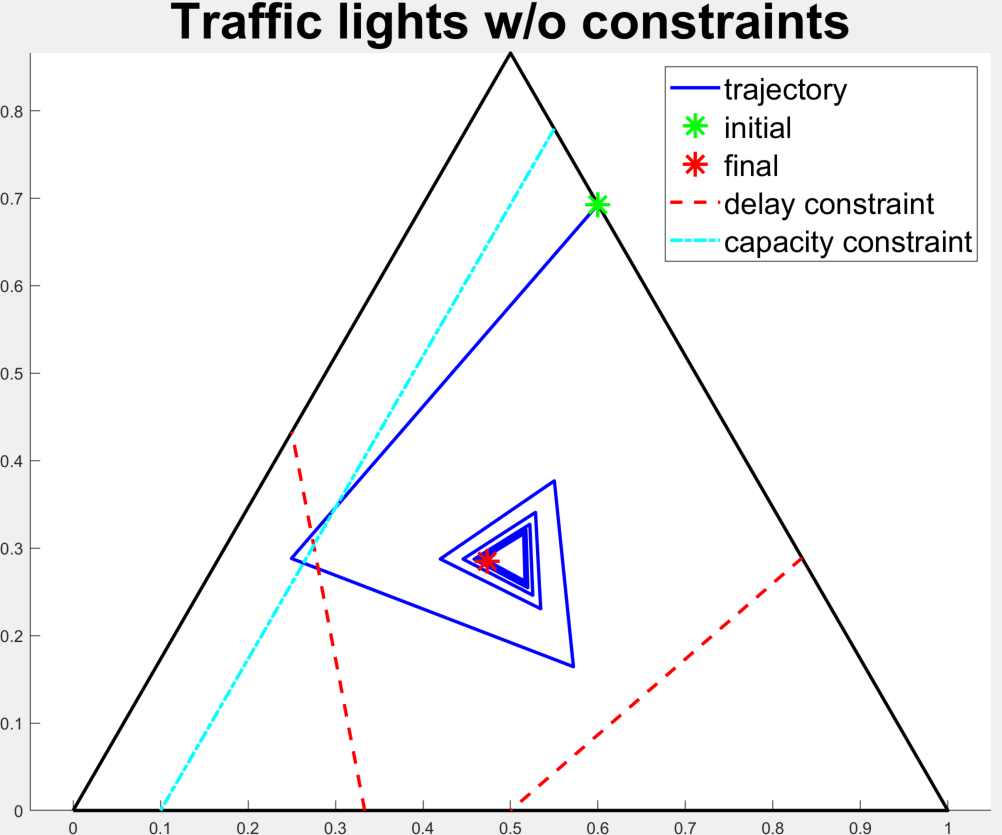}
    \caption{without constraints}
    \label{fig:traffic_wo}
    \end{subfigure}
    \hfill
    \begin{subfigure}[t]{0.23\textwidth}
    \includegraphics[width=\textwidth]{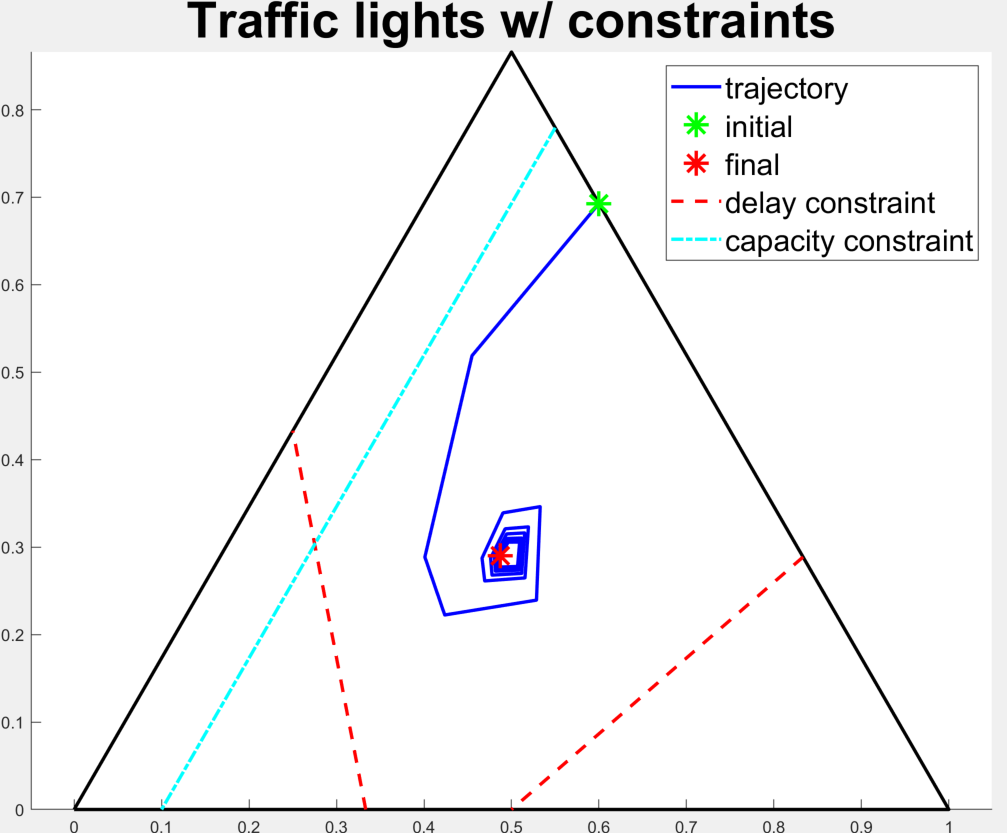}
    \caption{with constraints}
    \label{fig:traffic_w}
    \end{subfigure}
    \caption{Since the trajectory evolves on the 3-dimensional probability simplex, we project it and draw it on a plane. Compared with Fig. \ref{fig:traffic_wo},  the trajectory in Fig. \ref{fig:traffic_w} changes drastically and remains in the feasible set.}
\end{figure}

\subsection{Cost disturbances and dynamics disturbances} \label{sec:APP_dis}

In the following, we first discuss congestion games with cost disturbances and then dynamics disturbances.

\subsubsection{Cost disturbances}\label{sec:APP_dis_pay}

In standard 
congestion games, the delays are merely based on the flow. However, there 
may be
other time-varying factors that affect the delays. For example, weather conditions cause additional delays,
and accidents increase the delays 
for a period until the road is cleared.
To capture these scenarios, we introduce a bounded time-varying cost disturbance $\Delta(t)$ with bounded derivative $\dot{\Delta}(t)$. 

A congestion game is strongly monotone if all the delay functions are strictly increasing. In this case, by Theorem \ref{thm:comb}, the convergence results of best response dynamics are robust to a certain extent of cost disturbances. The settings of the simulation are described in Fig. \ref{fig:congestion_net}, where the corresponding $F$ for the variational inequality is $
    F(x) = [2x_1+0.5x_3+0.3, 1.5x_2+0.5x_3+0.5, 0.5x_1+0.5x_2+2x_3+0.6]^T$.
\begin{figure}[!ht]
    \begin{minipage}[t]{.23\textwidth}
    \centering
    \includegraphics[scale=.4]{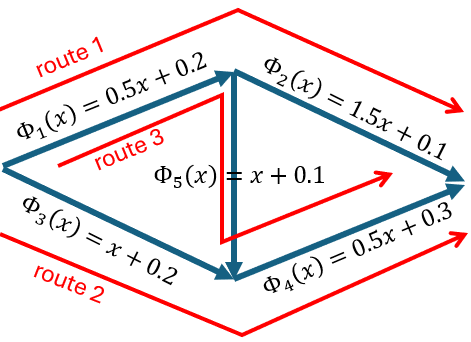}
    \caption{delay functions for links}
    \label{fig:congestion_net}
    \end{minipage}
    \hfill
    \begin{minipage}[t]{.23\textwidth}
    \centering
    \includegraphics[scale=.195]{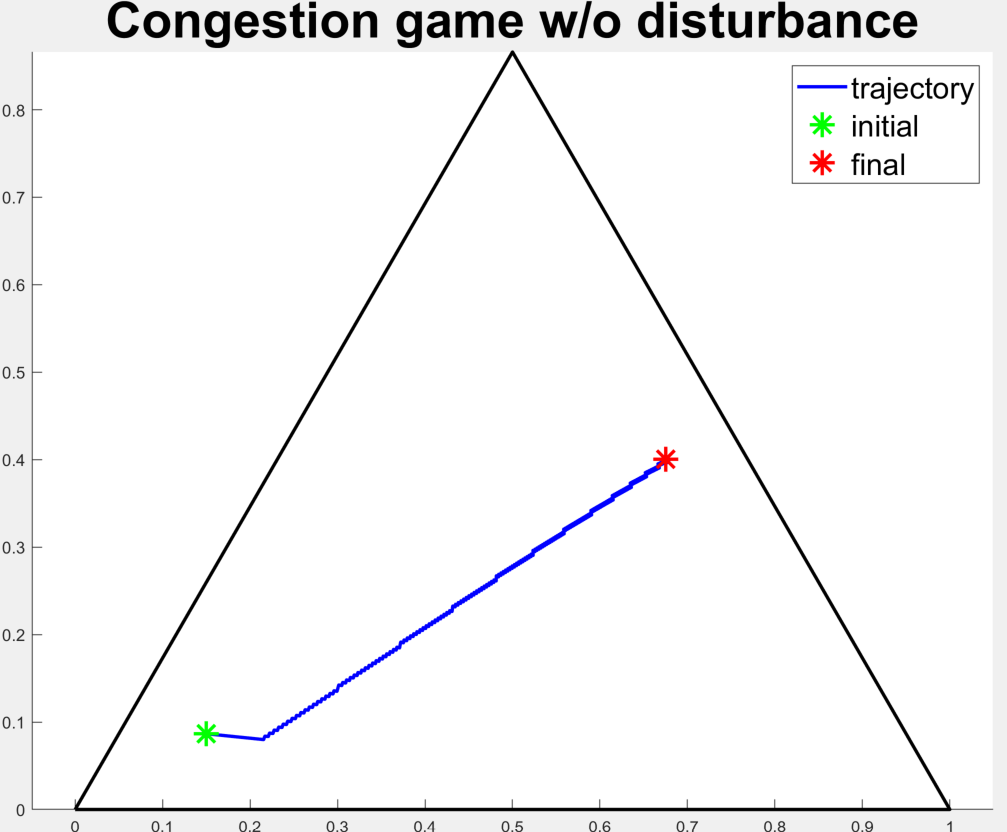}
    \caption{without disturbance}
    \label{fig:congestion_wo_pay_dis}        
    \end{minipage}
\end{figure}

The results of solving the variational inequality by the best response dynamics without cost disturbances are provided in Fig. \ref{fig:congestion_wo_pay_dis}. Note that the function $F$ is strongly monotone and thus admits a unique Nash equilibrium.

Next, we give simulation results with two examples of cost disturbances.
The first one is the periodic cost disturbance, $\Delta_1(t)=[0.1\sin(0.01t), -0.05\cos(0.02t+10), 0.15\sin(0.05t-20)]^T$. The result is shown in Fig. \ref{fig:congestion_w_pay_dis}. Since 
this
disturbance is periodic and does not vanish, the trajectory 
reaches a neighborhood of
the unperturbed equilibrium rather than the equilibrium itself. The second example is the diminishing cost disturbance, $\Delta_2(t)=20e^{-0.01t}\Delta_1(t)$. The simulation result is provided in Fig. \ref{fig:congestion_w_pt0}. Since 
the disturbance is large in the early stage, the trajectory 
moves
away from the original trajectory initially. However, as the disturbance vanishes, the trajectory gradually converges to the non-disturbed one as in Fig. \ref{fig:congestion_wo_pay_dis}.

\begin{figure}[!ht]
    \centering
    \includegraphics[scale=.22]{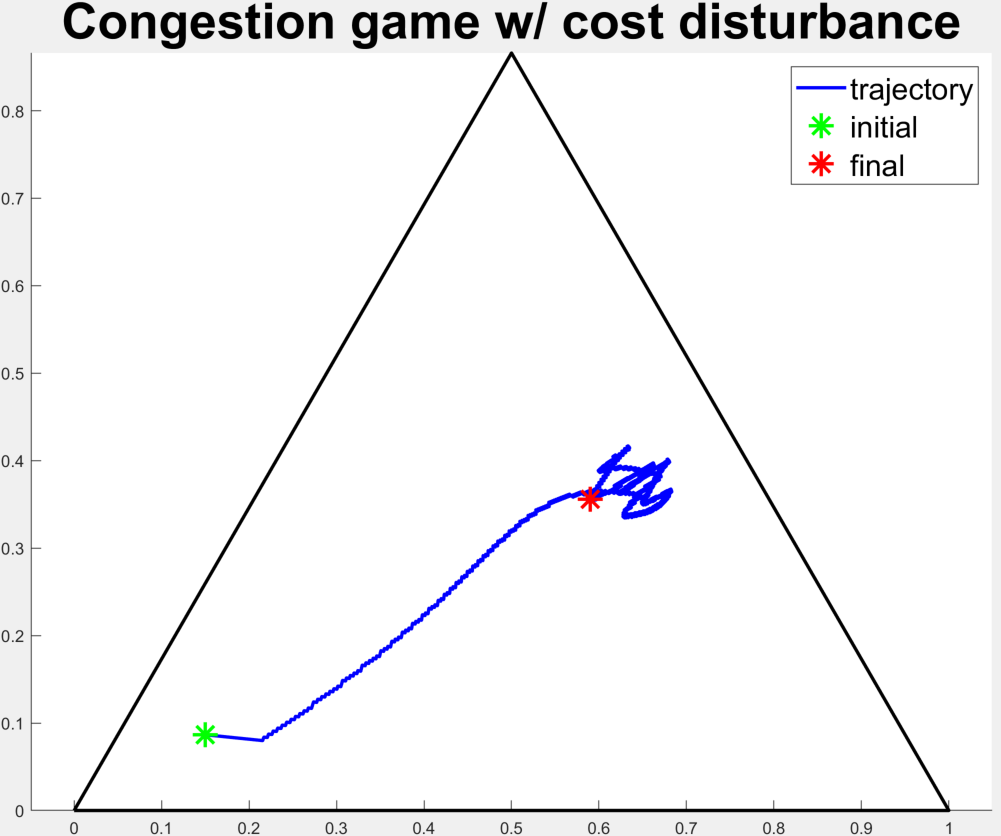}
    \caption{A periodic cost disturbance is injected. The state trajectory moves around the unperturbed equilibrium as in Fig. \ref{fig:congestion_wo_pay_dis}.}
    \label{fig:congestion_w_pay_dis}
\end{figure}
\begin{figure}[!ht]
    \centering
    \includegraphics[scale=.22]{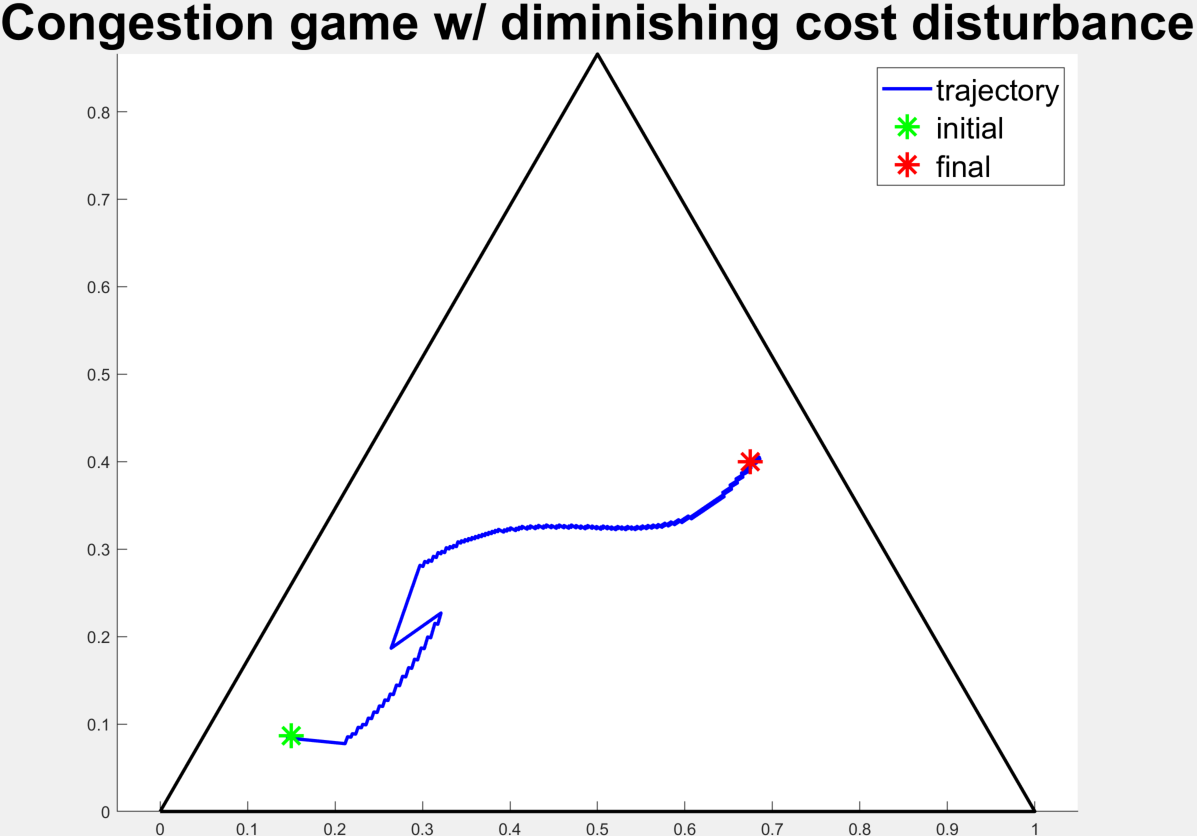}
    \caption{A diminishing cost disturbance is introduced. Initially, when the disturbance is large, the trajectory is driven away from the original unperturbed trajectory. However, as the disturbance vanishes, the trajectory gradually converges to the unperturbed one as in Fig. \ref{fig:congestion_wo_pay_dis}}
    \label{fig:congestion_w_pt0}
\end{figure}

\subsubsection{Dynamics disturbances}\label{sec:APP_dis_dyn}

Next we simulate the system with a dynamics disturbance $\varepsilon(t)=[0.7\sin(0.1t), 0.7\cos(0.2t-10), -0.7\sin(0.1t)-0.7\cos(0.2t-10)]^T$. Note that $\varepsilon(t)$ is 
admissible as defined in Definition \ref{def:admissible}. By Theorem \ref{thm:comb}, the resulting trajectory should remain close to the unperturbed one as in Fig. \ref{fig:congestion_wo_pay_dis}. The simulation results are provided in Fig. \ref{fig:congestion_w_dyn_dis}.
\begin{figure}[!ht]
    \centering
    \includegraphics[scale=.23]{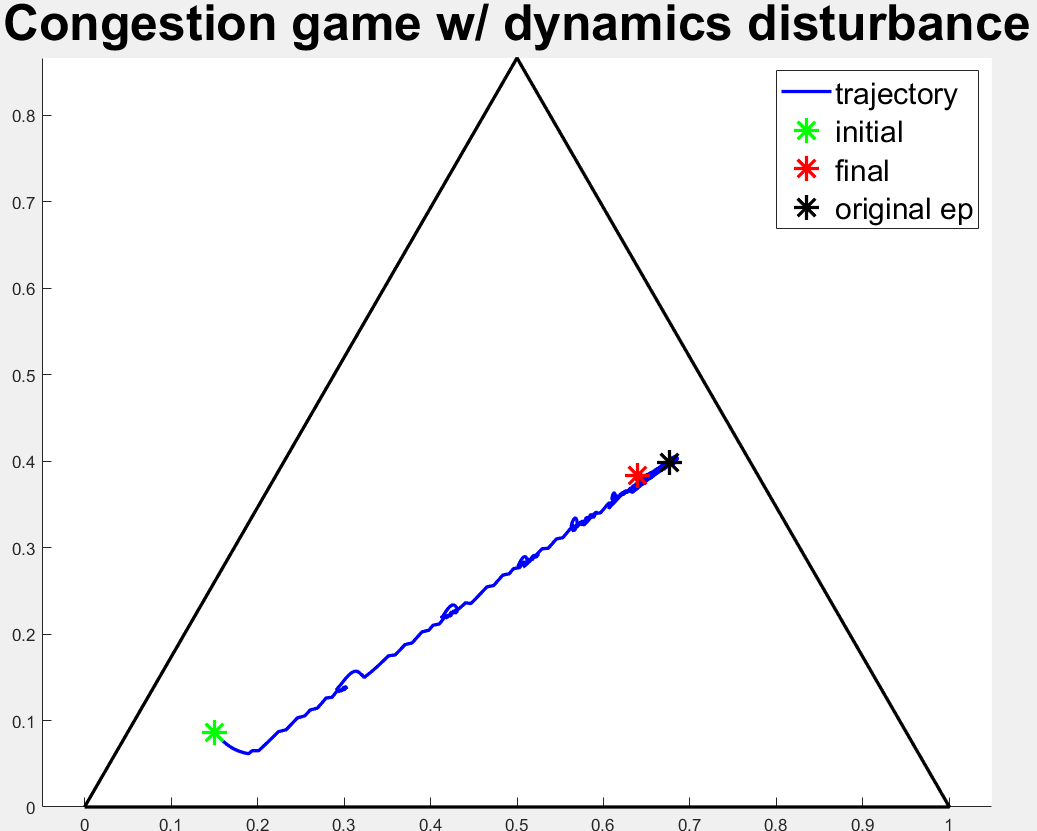}
    \caption{A periodic dynamics disturbance is added. The resulting trajectory remains close to the unperturbed one as in Fig. \ref{fig:congestion_wo_pay_dis}.}
    \label{fig:congestion_w_dyn_dis}
\end{figure}

\section{CONCLUSIONS AND FUTURE WORK}\label{sec:Conclusion}

In this paper, we leveraged the best response dynamics to solve monotone variational inequalities and provided robustness guarantees against classes of disturbances.
When the set $K$ is a subset of the probability simplex, as in our example, $x(t)$ can be viewed as the distribution of the agents to the strategies in a population game with constraints. In this special case, it would be possible to study learning rules other than the best response, such as those reviewed in \cite{Park2019CDC}.
Note, however, that our convergence results apply to any compact and convex set $K$; therefore, they are not restricted to subsets of the standard simplex used in population games.
This flexibility allows for addressing population games where the entries of $x(t)$ do not necessarily sum to a constant. This would allow ``outside options," {\it e.g.}, choosing not to drive in a congestion game. Future research will address such scenarios as well as the incorporation of other learning rules.




\bibliographystyle{IEEEtran}
\bibliography{reference.bib}

\end{document}